\DeclareMathAlphabet{\mathpzc}{OT1}{pzc}{m}{it}
\newcounter{main}
\numberwithin{equation}{section}
\newtheorem{theorem}{Theorem}[section]
\newtheorem{lemma}[theorem]{Lemma}
\newtheorem{maintheorem}{Theorem}
\newcommand{\blanksquare}{\,\,\,$\sqcup\!\!\!\!\sqcap$}
\newcounter{example}
{{\stepcounter{example}}{\flushleft {\bf Example \arabic{example}:}}}%
{\par}
\title[Topological stability for conservative systems]
{Topological stability for conservative systems}
\author[M. Bessa]{M\'{a}rio Bessa}
\address{Departamento de Matem\'atica da Universidade do Porto, 
Rua do Campo Alegre, 687, 
4169-007 Porto, Portugal \\ ESTGOH-Instituto Polit\'ecnico de Coimbra, Rua General Santos Costa, 3400-124 Oliveira do Hospital, Portugal}
\email{bessa@fc.up.pt}
\author[J. Rocha]{Jorge Rocha}
\address{Departamento de Matem\'atica da Universidade do Porto, 
Rua do Campo Alegre, 687, 
4169-007 Porto, Portugal}
\email{jrocha@fc.up.pt}
\begin{document}

\begin{abstract}
We prove that the $C^1$ interior of the set of all topologically stable $C^1$ incompressible flows is contained in the set of Anosov incompressible flows. Moreover, we obtain an analogous result for the discrete-time case.
\end{abstract}

\maketitle

\noindent\emph{MSC 2000:} primary 37C10;37C15; secondary 37C05, 37C27.\\
\emph{keywords:} Volume-preserving flows and diffeomorphisms, topological stability, Anosov systems.\\

\begin{section}{Introduction: basic definitions and statement of the results}

We consider an $n$-di\-men\-sio\-nal ($n\geq 3$) closed and connected $C^\infty$ Riemaniann manifold $M$ endowed with a volume-form $\omega$. Let $\mu$ denote the measure associated to $\omega$, that we call Lebesgue measure, and let $d(\cdot,\cdot)$ denote the metric induced by the Riemannian structure. We say that a vector field $X\colon M\rightarrow TM$ is \emph{divergence-free} if $\nabla\cdot X=0$ or equivalently if the measure $\mu$ is invariant for the associated flow, $X^t\colon M\rightarrow M$, $t \in \mathbb{R}$. In this case we say that the flow is \emph{incompressible} or \emph{volume-preserving}. We denote by $\mathfrak{X}_\mu^r(M)$ ($r\geq 1$) the space of $C^r$ divergence-free vector fields on $M$ topologized with the usual $C^r$ Whitney topology.

Given $X\in\mathfrak{X}_\mu^{1}(M)$ let $Sing(X)$ denote the set of \emph{singularities} of $X$ and $\mathpzc{R}=\mathpzc{R}(M):=M\setminus Sing(X)$ the set of \emph{regular} points. We say that $\sigma\in Sing(X)$ is a \emph{hyperbolic singularity} if $DX_\sigma$ has no eigenvalue with null real part.

Let $X,Y\in\mathfrak{X}_\mu^1(M)$; $Y$ is \emph{semiconjugated} to $X$ if the flow associated to $Y$ is semiconjugated to the one of $X$, i.e., there exists a continuous and onto map $h\colon M\rightarrow M$ and a continuous real map $\tau \colon M\times\mathbb{R}\rightarrow\mathbb{R}$ such that
\begin{enumerate}
 \item [a)] for any $x\in M$, $\tau_x \colon \mathbb{R}\rightarrow\mathbb{R}$ is an orientation preserving homeomorphism where $\tau(x,0)=0$ and
 \item [b)] for all $x\in M$ and $t\in\mathbb{R}$ we have $h(Y^t(x))=X^{\tau(x,t)}(h(x))$.
\end{enumerate}

We say that $X\in\mathfrak{X}_\mu^{1}(M)$ is \emph{topologically stable} in $\mathfrak{X}_\mu^{1}(M)$ if for any $\epsilon>0$, there exists $\delta>0$ such that for any $Y\in \mathfrak{X}_\mu^{1}(M)$ $\delta$-$C^0$-close to $X$, there exists a semiconjugacy from $Y$ to $X$, i.e., there exists $h\colon M\rightarrow M$ and $\tau \colon M\times\mathbb{R}\rightarrow\mathbb{R}$ satisfying a) and b) above, and $d(h(x),x)<\epsilon$ for all $x\in M$. It is worth to emphasize that our definition of topological stability is restricted to the conservative setting and not to the  broader space of dissipative flows. We denote the set of topological stable incompressible flows by $\mathpzc{TS}_\mu(M)$.

A vector field is said to be \emph{Anosov} if the tangent bundle $TM$
splits into three continuous $DX^t$-invariant nontrivial subbundles
$E^0\oplus E^1\oplus E^2$ where $E^0$ is the flow direction, the
sub-bundle $E^2$ is uniformly contracted and the
sub-bundle $E^1$ is uniformly expanded by $DX^t$
for $t>0$. Of course that, for an Anosov flow, we have $Sing(X)=\emptyset$ which follows from the fact that the dimensions of the subbundles are constant on the entire manifold.

For analogous definitions in the volume-preserving diffeomorphisms context see \S\ref{diff}.

The concept of topological stability was first introduced by Walters. In (\cite{W}) he proved that Anosov diffeomorphisms are topologically stable. In (\cite{Ni}) Nitecki proved that topological stability was a necessary condition to get Axiom A plus strong transversality. Later, in (\cite{Ro2}), Robinson proved that Morse-Smale flows are topologically stable. In the mid 1980's, Hurley obtained necessary conditions for topological stability (see \cite{H,H2,H3}). About ten years ago it was proved by Moriyasu, Sakai and Sumi  (see~\cite{MSS}) that, if $X$ is a vector field in the $C^1$ interior of the set of topologically stable vector fields (in the broader space of dissipative flows) then $X$ satisfies  the Axiom A and the strong transversality properties. Our main result (Theorem~\ref{teo1}) is a generalization of the main theorem in (\cite{MSS}) for divergence-free vector fields. Although this result is expectable, its proof uses perturbations techniques that only recently become available.

Given a set $A\subset \mathfrak{X}^1_\mu(M)$ let $int_{C^1}(A)$ denote the interior of $A$ in $\mathfrak{X}^1_\mu(M)$ with respect to the $C^1$-topology.

\begin{maintheorem}\label{teo1}
If $X\in int_{C^1}(\mathpzc{TS}_\mu(M))$ then $X$ is Anosov.
\end{maintheorem}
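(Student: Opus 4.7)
The plan is to adapt the dissipative strategy of Moriyasu-Sakai-Sumi \cite{MSS} to the incompressible setting, using recent conservative perturbation tools in place of their classical ones, and to exploit Poincaré recurrence (which forces the non-wandering set to equal $M$) in order to upgrade the conclusion from Axiom A plus strong transversality to uniform hyperbolicity. I would organise the argument in three steps: ruling out singularities; establishing robust hyperbolicity of all closed orbits; and invoking a star-type theorem for conservative flows.

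For the first step, suppose for contradiction that $\sigma\in Sing(X)$. A Kupka-Smale argument for divergence-free vector fields (Robinson's theorem adapted to the conservative category) allows us to assume, after a $C^1$-small perturbation that stays inside $int_{C^1}(\mathpzc{TS}_\mu(M))$, that $\sigma$ is hyperbolic. Since $\nabla\cdot X=0$ we have $\operatorname{tr}(DX_\sigma)=0$, so $\sigma$ must be a saddle with eigenvalues of both positive and negative real part. Using a Franks-type lemma in $\mathfrak{X}^1_\mu(M)$ (the conservative perturbation lemmas for flows developed recently) I would construct $Y$ arbitrarily $C^1$-close to $X$ producing a nearby extra periodic orbit or a small transverse intersection of local invariant manifolds that has no orbit-preimage in $X$ under any semiconjugacy $h$ close to the identity. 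Observe that a semiconjugacy $h$ must send the fixed point $\sigma_Y$ of $Y$ to a point of $M$ fixed by the whole family $\{X^{\tau(\sigma_Y,t)}\}_{t\in\mathbb{R}}$, and since $\tau_{\sigma_Y}$ is an orientation preserving homeomorphism of $\mathbb{R}$ this image lies in $Sing(X)$; combined with the extra local structure of $Y$ this produces the desired contradiction.

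For the second step, I would prove that every closed orbit $\gamma$ of $X$ is hyperbolic, and more precisely uniformly hyperbolic in a $C^1$-neighbourhood of $X$ in $\mathfrak{X}^1_\mu(M)$. If the linear Poincaré transformation along $\gamma$ had an eigenvalue of modulus one, a conservative Franks-type perturbation along $\gamma$ would create a small one-parameter family of closed orbits of a perturbation $Y$, or else two nearby closed orbits with distinct periods. Any semiconjugacy $h$ close to the identity would then be forced to collapse these distinct orbits onto the single orbit $\gamma$ with a continuous time reparameterisation $\tau$; tracking periods one obtains either a discontinuity of $\tau_x$ or a violation of $d(h(x),x)<\epsilon$. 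Together with step one this says that $X$ is a \emph{conservative star flow} with $Sing(X)=\emptyset$.

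Finally, invoking the incompressible analogue of the Liao-Mañé-Gan-Wen star-flow theorem, together with Pugh's closing lemma in the volume-preserving category and the fact that Poincaré recurrence gives $\Omega(X)=M$, I would conclude that $TM$ splits continuously and $DX^t$-invariantly as $E^0\oplus E^1\oplus E^2$ with uniform contraction on $E^2$ and uniform expansion on $E^1$, so $X$ is Anosov. The main obstacle lies in the first step: the perturbation theory for divergence-free vector fields is considerably more rigid than in the dissipative case, since one must preserve $\nabla\cdot X=0$ throughout the construction; the precise form of the conservative Franks/pasting lemma needed to produce an "incompatible" local perturbation near a hyperbolic saddle singularity - while remaining in $int_{C^1}(\mathpzc{TS}_\mu(M))$ - is exactly the recent technical input alluded to in the introduction, and getting it to interact correctly with the semiconjugacy hypothesis is the delicate part of the proof.
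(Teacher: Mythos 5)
Your global architecture matches the paper's: show that $X$ is a conservative star flow, i.e.\ $X\in\mathcal{G}^1_\mu(M)$, and then invoke the theorem that conservative star flows are Anosov (Ferreira's theorem, which is exactly your ``incompressible Liao--Ma\~n\'e--Gan--Wen'' step and is quoted as a black box in the paper too). The gap is in how you use the semiconjugacy, and it is not a technicality: in both of your first two steps you run the topological-stability argument in the wrong direction. Topological stability of a field $L$ in the interior gives a semiconjugacy $h$ from the \emph{perturbation} $W$ \emph{onto} $L$, i.e.\ $h(W^t(x))=L^{\tau(x,t)}(h(x))$ with $h$ close to the identity. Such an $h$ is onto but in no way injective, so creating ``extra'' closed orbits (or a one-parameter family of them) in the perturbation $Y$ and hoping $h$ cannot collapse them onto $\gamma$ does not work: the reparameterisation $\tau_x$ absorbs any discrepancy of periods, and neither a discontinuity of $\tau$ nor a violation of $d(h(x),x)<\epsilon$ follows. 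What surjectivity does give is the opposite implication: every orbit of the \emph{base} system is the $h$-image of some orbit of the perturbed one. The paper exploits exactly this. One first replaces $X$ by a nearby field $L$, still in $int_{C^1}(\mathpzc{TS}_\mu(M))$, which is \emph{linear} in a neighbourhood of the non-hyperbolic singularity (Moser chart plus a strengthened pasting lemma) or of the non-hyperbolic closed orbit (Zuppa smoothing, the conservative Franks lemma to keep a non-hyperbolic orbit, then linearisation); because $L$ is linear and non-hyperbolic there, it visibly has a whole orbit, distinct from the singularity or closed orbit, confined to a small neighbourhood of it. One then perturbs $L$ to $W$ making that invariant set hyperbolic. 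The semiconjugacy from $W$ onto $L$, being onto and $\epsilon$-close to the identity, forces $W$ to have a whole orbit trapped near the now-hyperbolic, locally linear invariant set and distinct from it --- impossible. This is precisely the ``delicate interaction'' you flag but do not supply.

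Two further points. First, your step one tries to rule out singularities altogether; that is neither needed nor achieved by your sketch --- the absence of singularities for a conservative star flow is part of the (hard) star theorem you quote in step three, and at the level of Theorem~\ref{teo2} one only has to exclude \emph{non-hyperbolic} singularities, by the same linearise-then-hyperbolise scheme as above. Second, the conservative Franks lemma requires $C^4$ regularity of the vector field, so before applying it one must $C^1$-approximate $X$ within $int_{C^1}(\mathpzc{TS}_\mu(M))$ by a smooth divergence-free field (Zuppa's regularisation); this step is absent from your outline.
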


Due to well-know results (see~\cite{PT}) about the restrictions of the existence of manifolds supporting Anosov flows, we obtain that, for general manifolds, the $C^1$-interior of topological stable incompressible flows must be empty. It is also interesting to note that, in the geodesic flow context, Anosov systems are not necessarily topological stable (see~\cite{R}).

\bigskip

Given $x\in \mathpzc{R}(X)$ we consider its normal bundle $N_{x}=X(x)^{\perp}\subset T_{x}M$ and define the \emph{linear Poincar\'{e} flow} by $P_{X}^{t}(x):=\Pi_{X^{t}(x)}\circ DX^{t}_{x}$ where $\Pi_{X^{t}(x)}:T_{X^{t}(x)}M\rightarrow N_{X^{t}(x)}$ is the projection along the direction of $X(X^{t}(x))$. Let $\Lambda \subset R$ be an $X^t$-invariant and compact set. We say that $\Lambda$ is a \emph{hyperbolic} set for the linear Poincar\'{e} flow if there exist constants $\lambda\in (0,1)$, $C>0$ and a splitting $N_x=N^u_x\oplus N^s_x$ such that for all $x\in \Lambda$ we have:
$$\|(P_{X}^{t}(x))^{-1}|_{N^{u}_{x}}\|<C\lambda^{t}\text{ and }\|P_{X}^{t}(x)|_{N^{s}_{x}}\|<C\lambda^{t}.$$

We say that $X \in \mathcal{G}_{\mu}^1(M)$ if there exists a neighborhood $\mathcal{V}$ of $X$ in $\mathfrak{X}_\mu^{1}(M)$ such that any $Y\in\mathcal{V}$, has  all the closed orbits and all the singularities of hyperbolic-type.

The following result, which is important \emph{per se}, will be crucial to obtain Theorem~\ref{teo1}.

\begin{maintheorem}\label{teo2}
 If $X\in int_{C^1}(\mathpzc{TS}_\mu(M))$ then $X\in \mathcal{G}^1_{\mu}(M)$.
\end{maintheorem}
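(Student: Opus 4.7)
I argue by contradiction. Fixing a $C^1$-neighborhood $\mathcal{U}\subset \mathpzc{TS}_\mu(M)$ of $X$, suppose that $X\notin \mathcal{G}^1_\mu(M)$. Then for every $C^1$-subneighborhood $\mathcal{V}\subset\mathcal{U}$ of $X$ there is some $Y\in\mathcal{V}$ exhibiting either a non-hyperbolic closed orbit $\gamma$ or a non-hyperbolic singularity $\sigma$. The aim is to produce from such a $Y$, via successive conservative perturbations, a vector field $Z\in\mathcal{U}$ together with a $C^0$-small divergence-free perturbation $W$ of $Z$ for which no semiconjugacy $\epsilon$-close to the identity can link $W$ to $Z$, contradicting $Z\in\mathcal{U}\subset \mathpzc{TS}_\mu(M)$.

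\textbf{Main tools.} Two conservative perturbation results will do the heavy lifting. The first is a divergence-free Franks' lemma for flows, which realizes any $C^1$-small perturbation of the linear Poincar\'e return map $P_Y^T$ along a closed orbit (or of $DY_\sigma$ at a singularity) as a $C^1$-small perturbation of $Y$ inside $\mathfrak{X}_\mu^1(M)$. The second is a divergence-free pasting lemma that glues two divergence-free vector fields inside a small flowbox with prescribed $C^0$ control. Both tools are by now standard in the volume-preserving perturbation machinery.

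\textbf{Closed orbit case.} Applying the divergence-free Franks' lemma to $Y$, I would first obtain $Y_1\in\mathcal{U}$ whose Poincar\'e return map along (the continuation of) $\gamma$ has an eigenvalue equal to $1$; in the elliptic case one first replaces an irrational rotation block by a rational one and passes to the corresponding iterate. A second Franks-type perturbation $Z\in\mathcal{U}$ then makes $P_Z^T$ equal to the identity on a non-trivial central subspace of $N_\gamma$, so that $Z$ carries, in a small tubular neighborhood of $\gamma$, a non-trivial family of closed orbits of common period $T$ (or a degenerate one-parameter object of this sort). Using the divergence-free pasting lemma inside a small flowbox around $\gamma$, I would then build a $C^0$-small divergence-free perturbation $W$ of $Z$ whose set of closed orbits inside an $\epsilon_0$-tube around $\gamma$ differs from that of $Z$ either in cardinality or in period. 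Since $Z\in \mathpzc{TS}_\mu(M)$, for any $\epsilon<\epsilon_0$ there is a semiconjugacy $h$ from $W$ to $Z$ with $d(h(x),x)<\epsilon$; evaluating $h$ on the new closed orbits of $W$ forces an assignment to closed orbits of $Z$ preserving periods up to the reparametrization $\tau$, and the resulting orbit-count and period mismatch inside the tube produces the desired contradiction.

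\textbf{Singularity case and main obstacle.} For a non-hyperbolic singularity $\sigma$ the procedure is parallel: via the divergence-free Franks' lemma applied at $\sigma$ (combined, if needed, with a Moser-type local conjugation preserving $\omega$), I would produce $Z\in\mathcal{U}$ admitting an invariant circle of recurrent points or a family of closed orbits accumulating on $\sigma$, and then rerun the pasting argument to break topological stability. The principal technical difficulty is the simultaneous $C^1$ and $C^0$ control of these successive perturbations: $Z$ must remain in $\mathcal{U}$ so that its topological stability applies, while $W$ must be $C^0$-close enough to $Z$ to invoke the semiconjugacy, and $\epsilon$ must be chosen small enough to preclude $h$ from resolving the orbit-count or period discrepancy. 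Orchestrating this interplay strictly within $\mathfrak{X}_\mu^1(M)$ is the heart of the argument.
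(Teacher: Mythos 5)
Your overall strategy is the same as the paper's: linearize the field near the degenerate critical element (after a $C^\infty$ regularization in the sense of Zuppa, a step you gloss over but which is genuinely needed, since the conservative Franks' lemma and the pasting lemmas require extra smoothness of the unperturbed field), then perturb to make that element hyperbolic and play the two fields against each other through the semiconjugacy. The genuine gap is in the mechanism you propose for the final contradiction. First, the semiconjugacy for flows carries an arbitrary orientation-preserving reparametrization $\tau(x,\cdot)$ of $\mathbb{R}$, so a closed $W$-orbit of period $T$ through $x$ is sent to a closed $Z$-orbit admitting $\tau(x,T)$ as a period, and $\tau(x,T)$ is not controlled; ``period mismatch'' therefore yields no contradiction. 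Second, $h$ is only onto, not injective: several closed orbits of $W$ may collapse onto a single closed orbit of $Z$, and a closed orbit of $Z$ may be the image of a non-closed $W$-orbit, so ``orbit-count mismatch'' inside the tube is not a contradiction either. As stated, the last step of both your cases fails.

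The deduction that does work --- and is the one used in the paper --- replaces counting and periods by \emph{confinement}. Keep the non-hyperbolic, linearized field (your $Z$, the paper's $Z_0$ near the singularity and $L$ near the closed orbit $\Gamma$) as the topologically stable one; by linearity and the presence of a center direction it has a non-trivial full orbit through some point $z$ that stays for all time in a small ball around $\sigma$ (resp.\ in a thin tube around $\Gamma$) while keeping a definite distance from $\sigma$ (resp.\ from $\Gamma$). Now perturb in $\mathfrak{X}^1_\mu(M)$ so that $\sigma$ (resp.\ $\Gamma$) becomes hyperbolic, obtaining $W$ semiconjugated to the stable field by some $h$ with $d(h(x),x)<\epsilon$. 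Surjectivity of $h$ gives $w\in h^{-1}(z)$, the relation $h(W^t(w))=Z^{\tau(w,t)}(z)$ forces the entire $W$-orbit of $w$ to remain in an $\epsilon$-neighborhood of the confined $Z$-orbit of $z$, and $d(h(w),w)<\epsilon$ guarantees that $w$ is neither $\sigma$ nor a point of $\Gamma$. This contradicts local maximality of a hyperbolic linear singularity (resp.\ hyperbolic closed orbit): no full orbit other than the critical element itself remains in a small isolating neighborhood. Your construction of $Z$ with an identity block on a central subspace is precisely what produces the confined orbit, so the pieces are all in place; only the final argument needs to be rerouted through this pull-back of a confined orbit rather than through cardinalities and periods.
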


The next result was proved recently by Ferreira (\cite{F}) and is a generalization of a  three-dimensional theorem by the authors (\cite{BeRo3}).
\begin{maintheorem}(Ferreira~\cite{F})\label{Ferreira}
If $X\in \mathcal{G}^1_{\mu}(M)$ then $X$ is Anosov.
\end{maintheorem}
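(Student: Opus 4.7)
My plan is to show that the linear Poincar\'{e} flow $P_X^t$ is uniformly hyperbolic on all of $M$ and to deduce from this, in the absence of singularities, that $DX^t$ carries an Anosov splitting. The overall approach has four conceptual steps: rule out singularities, produce a uniformly dominated splitting for $P_X^t$ over the closed orbits, extend the splitting to the whole manifold, and upgrade it to uniform hyperbolicity. Each step uses the openness of $\mathcal{G}_\mu^1(M)$ together with a conservative Franks-type lemma for divergence-free vector fields that lets one realize prescribed linear changes of $P_X^t$ along a closed orbit by small $C^1$ perturbations of $X$ inside the given neighborhood $\mathcal{V}$.

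For the first two steps I would argue by contradiction. If $\sigma\in Sing(X)$, then $\sigma$ is hyperbolic of saddle type (zero eigenvalues are ruled out by $\mathcal{G}_\mu^1$ and the sum of eigenvalues of $DX_\sigma$ vanishes by the divergence-free condition). A conservative connecting/closing argument together with a local volume-preserving perturbation near $\sigma$ should then produce, inside $\mathcal{V}$, a closed orbit whose Poincar\'{e} return map carries an eigenvalue on the unit circle, contradicting $\mathcal{G}_\mu^1$. A parallel Franks-lemma argument in the spirit of Ma\~{n}\'{e} and Bochi--Viana handles $P_X^t$ on the periodic set: failure of a uniformly dominated splitting $N=N^u\oplus N^s$ along a long periodic orbit would allow one to perturb $X$ inside $\mathcal{V}$ so as to create a closed orbit with a non-hyperbolic linear Poincar\'{e} return map, again contradicting $\mathcal{G}_\mu^1$.

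To extend the dominated splitting from the periodic set to all of $\mathpzc{R}(X)=M$, I would invoke the ergodic closing lemma in the conservative flow setting: since $\mu$ is the invariant measure of $X^t$ and is fully supported, $\mu$-almost every point is accumulated by hyperbolic closed orbits of some $Y\in\mathcal{V}$. The cone-field characterization of dominated splittings then propagates the uniform domination constants across these approximations, yielding a dominated splitting $N=N^u\oplus N^s$ on a full $\mu$-measure set and, by closedness of the dominated condition, on all of $M$.

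The most delicate step, and the one I expect to be the main obstacle, is upgrading this dominated splitting to genuine uniform hyperbolicity. The plan is to combine the Oseledets decomposition of $P_X^t$ with respect to $\mu$, the conservativity constraint (which pins down the sum of the Lyapunov exponents), and a Ma\~{n}\'{e}-type estimate that controls integrated exponents from below by logarithms of eigenvalues of nearby hyperbolic closed orbits living in $\mathcal{V}$. The $\mathcal{G}_\mu^1$ assumption prevents the periodic spectrum from approaching the unit circle, so the bundles $N^s$ and $N^u$ must carry, respectively, uniformly negative and uniformly positive exponents; combined with domination this yields uniform contraction and expansion of $P_X^t$. Once $P_X^t$ is uniformly hyperbolic and $Sing(X)=\emptyset$, standard arguments promote the splitting of $N$ to an Anosov splitting $E^0\oplus E^1\oplus E^2$ of $TM$, proving that $X$ is Anosov.
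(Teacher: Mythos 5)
First, a point of comparison: the paper does not prove this statement at all --- Theorem~\ref{Ferreira} is imported from Ferreira's preprint \cite{F} and used as a black box (the paper only proves Theorem~\ref{teo2} and combines it with Theorem~\ref{Ferreira} to obtain Theorem~\ref{teo1}). So your proposal has to be measured against the external proof. At the level of architecture your outline does match that proof: hyperbolicity of the linear Poincar\'e flow over the closed orbits via a conservative Franks-type lemma, propagation to all of $M$ by an ergodic closing lemma together with the full support of $\mu$ and the closedness of domination, an exponent estimate comparing Oseledets data with the spectra of nearby periodic orbits to upgrade domination to hyperbolicity of $P_X^t$, and finally the standard equivalence (Doering-type) between hyperbolicity of $P_X^t$ on a compact invariant set without singularities and hyperbolicity of $DX^t$.

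The genuine gap is your first step, the exclusion of singularities, and it is not a minor one. The mechanism you propose --- perturb near a hyperbolic saddle $\sigma$ so as to create a closed orbit whose Poincar\'e return map has a unit-circle eigenvalue --- is unsupported: nothing in the local picture at a hyperbolic saddle produces such an orbit, and the step cannot be soft, because its dissipative analogue is false (the geometric Lorenz attractor is a star flow with a singularity which is not Axiom~A). Any correct argument must use conservativity essentially. The mechanism that actually works is different: by Poincar\'e recurrence every point is nonwandering, so the conservative $C^1$ connecting lemma yields regular orbits of flows in $\mathcal{V}$ accumulating on $\sigma$ (indeed homoclinic-type connections), and one then shows that the hyperbolic splitting $T_\sigma M=E^s\oplus E^u$ of $DX_\sigma$ is incompatible with a dominated splitting of the normal bundle along orbits passing arbitrarily close to $\sigma$, because the flow direction collapses onto eigendirections of different index depending on how the orbit enters and leaves a neighborhood of $\sigma$. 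This also shows that your ordering is slightly off: singularities must be removed before the cone-field/closedness argument of your third step even makes sense, since $N_x$ and $P_X^t$ degenerate as $x$ approaches $Sing(X)$. The remaining steps are correct in outline, though the ``Ma\~n\'e-type estimate'' of your fourth step is itself a substantial lemma rather than a routine verification.
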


Theorem~\ref{teo1} is a direct consequence of Theorem ~\ref{teo2} and Theorem~\ref{Ferreira} and for this reason we just have to concentrate on the proof of Theorem~\ref{teo2}.


\end{section}

\begin{section}{Perturbation lemmata}
\begin{subsection}{Perturbations near singularities}

Some key results to perform perturbations in the conservative setting are available (see~\cite{AM}). Nevertheless, neither \cite[Theorem 3.1]{AM} nor \cite[Theorem 3.2]{AM} are adequate to go on with the proof of our result. Therefore, we need to obtain an upgrade of these pasting lemmas and this is the content of the Lemma~\ref{am}. We would like to thank Carlos  Matheus for a valuable suggestion for the proof of this lemma.  

\begin{lemma}\label{am} Let $M$ be a compact and boundaryless Riemannian manifold of dimension $\geq 2$. Given $\epsilon>0$, $X\in\mathfrak{X}^1_{\mu}(M)$, a compact $\mathcal{K}\subset M$ and an open neighborhood $\mathcal{U}$ of $\mathcal{K}$,  there are $\delta>0$  and an open set $\mathcal{K} \subset \mathcal{V} \subset \mathcal{U}$ such that, if  $Y\in\mathfrak{X}^2_{\mu}(M)$ is $\delta$-$C^1$-close to $X$ in $\mathcal{U}$, then there exists $Z\in \mathfrak{X}^1_{\mu}(M)$  satisfying
\begin{enumerate}
 \item [a)] $Z=Y$ in $\mathcal{V}$,
 \item [b)] $Z$ is $\epsilon$-$C^1$-close to $X$ and
 \item [c)] $Z=X$ outside $\mathcal{U}$.
\end{enumerate}
\end{lemma}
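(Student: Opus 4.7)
The plan is to paste $Y$ to $X$ not by cutting off the vector field $V := Y - X$ directly (which would destroy the divergence-free condition), but by cutting off a local \emph{potential} for $V$, exploiting the fact that the $(n-1)$-form $i_V\omega$ is closed because $\operatorname{div} V = 0$. First I would fix nested open sets $\mathcal{K} \subset \mathcal{V}_0 \subset \overline{\mathcal{V}_0} \subset \mathcal{V} \subset \overline{\mathcal{V}} \subset \mathcal{U}$ and a smooth bump function $\rho : M \to [0,1]$ with $\rho \equiv 1$ on $\mathcal{V}_0$ and $\operatorname{supp}(\rho) \subset \mathcal{V}$.

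I would treat first the local model, assuming $\mathcal{U}$ is contained in a single coordinate chart diffeomorphic to a star-shaped open set in $\mathbb{R}^n$ with $\omega$ the standard volume form. The Poincar\'e lemma with estimates yields an $(n-2)$-form $\alpha$ on $\mathcal{U}$ satisfying $d\alpha = i_V\omega$ and $\|\alpha\|_{C^1} \leq C_0 \|V\|_{C^1}$, where $C_0$ depends only on the chart geometry. Then define $Z$ by
\[
i_Z\,\omega \;=\; i_X\,\omega \;+\; d(\rho\, \alpha).
\]
Since $d(i_Z\omega) = 0$ identically, $Z$ is divergence-free; on $\mathcal{V}_0$ the identity $d(\rho\alpha) = d\alpha = i_V\omega$ forces $Z = Y$; outside $\operatorname{supp}(\rho)$ the correction vanishes so $Z = X$. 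The regularity is slightly subtle: although $\alpha$ is only $C^1$ (inheriting the regularity of $V$, which is limited by $X \in C^1$), the Leibniz expansion
\[
d(\rho\,\alpha) \;=\; d\rho \wedge \alpha \;+\; \rho\, i_V\omega
\]
displays $d(\rho\alpha)$ as a sum of two $C^1$ forms (the first being $C^\infty \wedge C^1$, the second being $C^\infty \cdot C^1$, using that $V \in C^1$), hence $Z \in \mathfrak{X}^1_\mu(M)$. The estimate
\[
\|Z - X\|_{C^1} \;\lesssim\; \bigl(\|\rho\|_{C^2}\, C_0 + \|\rho\|_{C^1}\bigr)\|V\|_{C^1} \;=:\; C_\rho\,\|V\|_{C^1}
\]
then holds, and setting $\delta := \epsilon / C_\rho$ closes the single-chart case.

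For the general case, the main obstacle is that no global primitive $\alpha$ exists when $i_V\omega$ is not exact on $\mathcal{U}$, which can happen whenever $H^{n-1}(\mathcal{U}) \neq 0$. I would cover the closure of the transition region $\overline{\mathcal{U}} \setminus \mathcal{V}_0$ by finitely many chart balls $B_1,\ldots,B_N$ contained in $\mathcal{U} \setminus \overline{\mathcal{W}}$ for a smaller open neighborhood $\mathcal{W}$ of $\mathcal{K}$ with $\overline{\mathcal{W}} \subset \mathcal{V}_0$, and apply the single-chart construction iteratively: at the $i$-th step, perform a local paste inside $B_i$ (against the current vector field, not against the original $Y$) using an auxiliary bump supported in $B_i$, so that after $N$ steps the field equals $Y$ on $\mathcal{W}$ and $X$ outside $\mathcal{U}$. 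The $C^1$-errors compound across the $N$ steps but stay controlled by a single constant depending only on the cover, which then determines the final $\delta$. I expect this global piecing to be the technical heart of the argument (and the point presumably raised by C.~Matheus), with the $C^2$-regularity of $Y$ entering here to keep the intermediate iterates of class $C^1$ as potentials are repeatedly transferred between overlapping charts.
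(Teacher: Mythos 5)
Your single-chart construction is correct, and it takes a genuinely different route from the paper. The paper never introduces a potential: it forms the naive convex combination $Z_0=\alpha Y+(1-\alpha)X$ for a bump function $\alpha$, computes $\nabla\cdot Z_0=(\nabla\alpha)\cdot(Y-X)$ (small but nonzero on the annulus $\mathcal{U}\setminus\mathcal{V}$), and then invokes the Dacorogna--Moser theorem to produce a correction $Z_1$ supported in $\mathcal{U}\setminus\mathcal{V}$ with $\nabla\cdot Z_1=-\nabla\cdot Z_0$ and $\|Z_1\|_{C^1}\leq C\|\nabla\cdot Z_1\|_{C^0}$; the gain of one derivative in that estimate is what closes the $C^1$ bound. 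Your ``cut off a primitive of $i_V\omega$'' argument avoids that PDE input entirely, and, as you observe, does not actually use $Y\in C^2$. In the only situation where the paper applies the lemma ($\mathcal{K}$ and $\mathcal{U}$ small balls around a singularity, via Lemma~\ref{linear}), your single-chart case already suffices.

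The gap is in your global step, and it is not merely technical: the iteration over balls cannot work, because when the obstruction you correctly identify is nonzero the conclusion of the lemma is simply false, so no patching scheme can recover it. Concretely, take $M=\mathbb{T}^2$ with $\omega=dx\wedge dy$, $X=\partial_y$, $Y=(1+\tfrac{\delta}{2})\partial_y$, $\mathcal{K}=S^1\times\{0\}$ and $\mathcal{U}=S^1\times(-\tfrac14,\tfrac14)$. Any divergence-free $Z$ equal to $Y$ near $\mathcal{K}$ and equal to $X$ outside $\mathcal{U}$ would have flux $1+\tfrac{\delta}{2}$ through the circle $\{y=0\}$ and flux $1$ through the homologous circle $\{y=\tfrac12\}$, contradicting $\nabla\cdot Z=0$ on the cylinder between them. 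The obstruction is the image of the class of $i_{Y-X}\omega$ in $H^{n-1}$ of the transition region, and it is invariant under every local paste you propose, since each step changes $i_Z\omega$ only by an exact form of small support; the compounding of $C^1$-errors is not the issue. The correct statement must therefore either assume $\mathcal{U}$ is a ball (all the paper needs) or add a hypothesis killing this flux class. For what it is worth, the paper's own proof has the same silent lacuna: solving $\nabla\cdot Z_1=-\nabla\cdot Z_0$ with $Z_1$ supported in $\mathcal{U}\setminus\mathcal{V}$ requires $\int\nabla\cdot Z_0\,d\mu=0$ on each component of the annulus, which is exactly this flux condition. So you should not try to make the general case work; state and prove the chart version, which is what is used.
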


\begin{proof}
We consider $\mathcal{V}\supset \mathcal{K}$ such that $\partial \mathcal{V}$ is $C^\infty$, $\overline{\mathcal{V}}\subset \mathcal{U}$ and $\{\mathcal{U},\text{int}(M\setminus \overline{\mathcal{V}})\}$ is an open covering of $M$. Let $\alpha\colon M\rightarrow[0,1]$ be a $C^\infty$ function such that $\alpha=1$ in $\mathcal{V}$, $\alpha=0$ outside $\mathcal{U}$ and $|\nabla\alpha|\leq K$, where $K$ is a positive constant depending only on $\mathcal{U}$ and $\mathcal{V}$. Define
\begin{equation}
 Z_0(x) := \alpha(x) Y(x) + (1-\alpha(x)) X(x),
\end{equation}
where $Y\in\mathfrak{X}^2_{\mu}(M)$ is $\delta$-$C^1$-close to $X$ on a small open neighborhood $\mathcal{U}$ of $\mathcal{K}$, where $\delta>0$ will be fixed in (\ref{delta}). We observe that $Z_0=Y$ inside $\mathcal{K}$ and $Z_0=X$ outside $\mathcal{U}$, and thus $\nabla\cdot Z_0=0$ in the closed set $\overline{\mathcal{V}}\cup (M\setminus\mathcal{U})$. However, although $\nabla\cdot Z_0$ is close to zero in $\mathcal{U}\setminus\mathcal{V}$, in general $\nabla\cdot Z_0\not=0$. Actually,

\begin{eqnarray*}
\nabla\cdot Z_0&=& (\nabla\alpha)\cdot Y+\alpha(\nabla\cdot Y)  -(\nabla\alpha)\cdot  X+(1-\alpha)\nabla\cdot Y\\
&=& (\nabla\alpha)\cdot Y -(\nabla\alpha)\cdot X= (\nabla\alpha)\cdot (Y- X),
\end{eqnarray*}
and we have $|\nabla\cdot Z_0|<K\delta$.

Now we will make use of \cite[Theorem 2]{DM} in order to obtain $Z_1\in\mathfrak{X}^2_{\mu}(M)$ (supported in $\mathcal{U}\setminus\mathcal{V}$) such that $\nabla\cdot Z_1=-\nabla\cdot Z_0$ and $Z_1=0$ in $\partial(\mathcal{U}\setminus\mathcal{V})$.

Finally, we define
\begin{equation}
Z:=Z_0+Z_1. 
\end{equation}
Of course that, by construction, $\nabla\cdot Z=0$ and c) holds. 

In $\mathcal{U}\setminus\mathcal{V}$ we have
\begin{eqnarray*}
 \|Z-Y\|_{C^1}&=&\|Z_0+Z_1-Y\|_{C^1}\leq \|Z_1\|_{C^1}+\|Z_0-Y\|_{C^1}\\
&\leq& C\|\nabla\cdot Z_1\|_{C^0}+\|Z_0-Y\|_{C^1},
\end{eqnarray*}
where $C>0$ is a constant given in Dacorogna-Moser theorem (see~\cite[Theorem 2.3]{AM}) that depends only on $\mathcal{U}$. 

Going back to the beginning of the proof, we take
\begin{equation}\label{delta}
\delta<\min\left\{\epsilon,\frac{\epsilon}{2CK}\right\}.
\end{equation}

Now, using $|\nabla\cdot Z_0|<K\delta$ we get,
\begin{eqnarray*}
\|Z-Y\|_{C^1}&\leq& C\|\nabla\cdot Z_1\|_{C^0}+\|Z_0-Y\|_{C^1} \leq CK\delta+\|\alpha Y + (1-\alpha) X-Y\|_{C^1}\\
&\leq& \frac{\epsilon}{2}+\| (1-\alpha) X-(1-\alpha) Y\|_{C^1}\leq \frac{\epsilon}{2}+|1-\alpha| \|X- Y\|_{C^1}\\
&\leq& \frac{\epsilon}{2}+\|X- Y\|_{C^1}\leq \frac{\epsilon}{2}+\delta<\epsilon.
\end{eqnarray*}

\end{proof}

The next lemma is the divergence-free vector fields version of \cite[Lemma 1.1]{MSS}.

\begin{lemma}\label{linear}
Let $\sigma$ be a singularity of $X \in \mathfrak{X}_\mu^1(M)$. For any $\epsilon >0$ there exist $\delta_0>0$ and $\epsilon_0>0$ such that if $Y_\delta \colon T_\sigma M \rightarrow T_\sigma M$ is a traceless linear map $\frac{\delta}{2}$-$C^0$-close to $DX_\sigma$ (with $\delta<\delta_0$) then there exists $Z_\delta \in  \mathfrak{X}_\mu^{1}(M)$, such that $Z_\delta=Y_\delta$ in $B_{\epsilon_0/4}(\sigma)$, $Z_\delta$ is $\epsilon$-$C^1$-close to $X$ and $Z_\delta=X$ outside the set $B_{\epsilon_0}(\sigma)$.
\end{lemma}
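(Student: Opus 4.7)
The plan is to mimic the strategy used in Lemma \ref{am}: build a cutoff interpolation between $Y_\delta$ and $X$ near $\sigma$, then correct the resulting small divergence error via the Dacorogna--Moser theorem. First I would invoke Moser's theorem to produce a chart $\phi\colon U\to\R^n$ centered at $\sigma$ in which $\omega$ becomes the standard Euclidean volume, identify $T_\sigma M$ with $\R^n$, and write $X(x)=Ax+r(x)$ with $A=DX_\sigma$ (traceless since $X$ is divergence-free) and $|r(x)|\leq C|x|^2$ near the origin. The given traceless linear $Y_\delta$ then becomes in this chart a genuinely divergence-free vector field $x\mapsto Y_\delta x$ on $U$. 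Choose $\epsilon_0$ small enough that $B_{\epsilon_0}(\sigma)\Subset U$, and fix a smooth bump $\alpha\colon M\to[0,1]$ with $\alpha\equiv 1$ on $B_{\epsilon_0/4}(\sigma)$, $\alpha\equiv 0$ outside $B_{\epsilon_0/2}(\sigma)$, and $|\nabla\alpha|\leq K/\epsilon_0$.

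Next, set
\[
Z_0:=\alpha\,Y_\delta+(1-\alpha)X
\]
in the chart (and $Z_0=X$ elsewhere), so that $Z_0=Y_\delta$ on $B_{\epsilon_0/4}$ and $Z_0=X$ off $B_{\epsilon_0/2}$. Since $Y_\delta$ and $X$ are both divergence-free in the chart, the same computation as in Lemma \ref{am} gives $\nabla\cdot Z_0=(\nabla\alpha)\cdot(Y_\delta-X)$, supported in the annulus $A:=B_{\epsilon_0/2}\setminus B_{\epsilon_0/4}$. On $A$ one has $|Y_\delta(x)-X(x)|\leq (\delta/2)|x|+C|x|^2=O((\delta+\epsilon_0)\epsilon_0)$ and $|DY_\delta-DX(x)|\leq \delta/2+O(\epsilon_0)$, whence
\[
\|\nabla\cdot Z_0\|_{C^0}\leq \tfrac{K}{\epsilon_0}\cdot O((\delta+\epsilon_0)\epsilon_0)=O(\delta+\epsilon_0)\quad\text{and}\quad\|Z_0-X\|_{C^1}=O(\delta+\epsilon_0).
\]

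Now apply the Dacorogna--Moser theorem on the annulus $A$ (the compatibility $\int_A\nabla\cdot Z_0=0$ holds by the divergence theorem, since on $\partial B_{\epsilon_0/2}$ the field $Z_0$ coincides with the divergence-free $X$ and on $\partial B_{\epsilon_0/4}$ with the divergence-free $Y_\delta$) to produce $Z_1\in\mathfrak{X}^2_\mu(M)$ supported in $A$, vanishing on $\partial A$, with $\nabla\cdot Z_1=-\nabla\cdot Z_0$ and $\|Z_1\|_{C^1}\leq C'\|\nabla\cdot Z_0\|_{C^0}=O(\delta+\epsilon_0)$. Then $Z_\delta:=Z_0+Z_1$ is globally divergence-free, equals $Y_\delta$ on $B_{\epsilon_0/4}(\sigma)$, equals $X$ outside $B_{\epsilon_0}(\sigma)$, and satisfies $\|Z_\delta-X\|_{C^1}=O(\delta+\epsilon_0)$; choosing $\delta_0$ and $\epsilon_0$ small enough (both depending on $\epsilon$) makes this bound smaller than $\epsilon$.

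The principal subtlety is the balance in the $C^1$ estimate: the cutoff's derivative is of order $1/\epsilon_0$, which would be ruinous on its own, but it is compensated by the fact that $Y_\delta-X$ vanishes to order $\epsilon_0$ at $\sigma$ together with the smallness of $\delta$. This is what forces $\epsilon_0$ to shrink with the target accuracy $\epsilon$, and it explains why both $\delta_0$ and $\epsilon_0$ must be produced by the lemma rather than being given a priori.
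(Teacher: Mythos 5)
Your proposal is correct and follows essentially the same route as the paper: after passing to a Moser chart, the paper simply notes that the linear field $Y_\delta$ is $C^1$-close to $X$ on a small ball $B_{\epsilon_0}(\sigma)$ and then invokes Lemma~\ref{am}, whose proof is precisely your cutoff-plus-Dacorogna--Moser correction, so you have in effect inlined that lemma (and your remark on how the $1/\epsilon_0$ from the cutoff gradient is cancelled by the vanishing of $Y_\delta-X$ at $\sigma$ makes the order of quantifiers more transparent than in the paper's formulation). The only slip is that $X$ is merely $C^1$, so the remainder in the chart satisfies $r(x)=o(|x|)$ rather than $O(|x|^2)$; this weaker bound still gives $\|\nabla\cdot Z_0\|_{C^0}=O(\delta)+o(1)$ as $\epsilon_0\to 0$, so nothing breaks.
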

\begin{proof}

Let $(U,\phi)$ be a conservative chart given by Moser's Theorem (\cite{Mo}) such that $\sigma \in U$ and $\phi(\sigma)=\textbf{0}$. Now we will work on the euclidean space $\mathbb{R}^n$.

We fix $\delta\in(0,\delta_0)$ where $\delta_0>0$ will be defined in the sequel. For simplicity we assume that $Y_\delta \colon \mathbb{R}^n \rightarrow \mathbb{R}^n$ is written in the canonical coordinates. Now, we consider the divergence-free linear vector field in $\mathbb{R}^n$ defined (in the canonical coordinates) by;
\begin{equation}\label{Y}
 (\dot{x}_1,...,\dot{x}_n)=Y_\delta (x_1,...,x_n).
\end{equation}

Now, let $\mathcal{K}=\overline{B_{\epsilon_0/4}(\textbf{0})}$ and $\mathcal{U}=B_{\epsilon_0}(\textbf{0})$. Since, by hypothesis, the map $Y_\delta$ is $\frac{\delta}{2}$-$C^0$-close to $DX_\textbf{0}$, if we choose $\epsilon_0$ very small, then $X$ is $\frac{\delta_0}{2}$-$C^1$-close to $DX_{\textbf{0}}$ when restricted to $\mathcal{U}$.  Therefore $Y_\delta$ (defined in (\ref{Y})) and $X$ are $\delta_0$-$C^1$-close in $\mathcal{U}$. This conditions gives $\epsilon_0$ depending on $\delta_0$. So, by Lemma~\ref{am}, fixed $\epsilon$, there exist $\delta_0$ and there are an open set $\mathcal{V}$; $\mathcal{K} \subset \mathcal{V} \subset \mathcal{U}$ and $Z_\delta\in \mathfrak{X}^1_{\mu}(M)$  such that $Z_\delta=Y_\delta$ in $\mathcal{V}$, $Z_\delta$ is $\epsilon$-$C^1$-close to $X$ and $Z_\delta=X$ outside $\mathcal{U}$ for $\delta<\delta_0$. The lemma is proved.
\end{proof}

\end{subsection}

\begin{subsection}{Perturbations near closed orbits}
 
Lemma~\ref{mpl} below is a \emph{Franks' lemma} for incompressible flows.

Define $\Gamma(p, \tau)=\{X^t(p);\,\, t \in [0, \tau]\}$. Let $V, \tilde{V} \subset N_p$, $dim(V)=j$, $2 \leq j \leq n-1$, and  $N_p=V \oplus \tilde{V}$. A \emph{one-parameter linear family} $\{A_t\}_{t\in \mathbb{R}}$ associated to $\Gamma(p, \tau)$ and $V$  is defined as follows:
\begin{itemize}
\item $A_t\colon N_p \rightarrow N_p$ is a linear map, for all $t\in \mathbb{R}$,
\item $A_t=id$, for all $t\leq 0$, and $A_t=A_{\tau}$, for all $t\geq \tau$,
\item $A_t|_V \in sl(j, \mathbb{R})$, and $A_t|_{\tilde{V}}\equiv id$, $\forall t \in [0, \tau]$, in particular we have  $\det(A_t)=1$, for all $t\in \mathbb{R}$, and
\item the family $A_t$ is $C^\infty$ on the parameter $t$.
\end{itemize}

\begin{lemma}\label{mpl}(\cite[Lemma 3.2]{BeRo2}) Given $\epsilon>0$ and a vector field $X \in \mathfrak{X}_\mu^4(M)$ there exists $\theta_0=\theta_0(\epsilon,X)$ such that $\forall \tau \in [1,2]$, for any periodic point $p$ of period greater than $2$, for any sufficient small flowbox  $\mathcal{T}$ of $\Gamma(p, \tau)$ and for any one-parameter linear family $\{A_t\}_{t \in [0, \tau]}$ such that $\|  \dot{A}_t A_t^{-1}\|<\theta_0$, $\forall t \in [0, \tau]$, there exists $Y \in \mathfrak{X}_\mu^1(M)$ satisfying the following properties
\begin{enumerate}
\item [(A)] $Y$ is $\epsilon$-$C^1$-close to $X$;
\item [(B)] $Y^t(p)=X^t(p)$, for all $t \in \mathbb{R}$;
\item [(C)] $P_Y^\tau(p)=P_X^\tau(p) \circ A_{\tau}$, and
\item [(D)] $Y|_{\mathcal{T}^c}\equiv X|_{\mathcal{T}^c}$.
\end{enumerate}
\end{lemma}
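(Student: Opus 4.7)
The plan is to straighten the flow near $\Gamma(p,\tau)$ via a conservative flowbox chart, encode the family $\{A_t\}$ as the fundamental solution of a linear ODE in the transverse direction, realize this ODE by a compactly-supported perturbation, and finally correct the small residual divergence by means of Lemma~\ref{am}. Concretely, I would first invoke Moser's theorem to obtain coordinates $(x_1,y_1,\dots,y_{n-1})$ on a flowbox contained in $\mathcal{T}$ and containing $\Gamma(p,\tau)$, in which $X=\partial/\partial x_1$ and the volume form is standard Lebesgue; such a flowbox exists because $p$ has period greater than $2$, so $\Gamma(p,\tau)$ embeds. The section $\{x_1=0\}$ is identified with a transverse disk of radius $r_0$ carrying the splitting $N_p=V\oplus\tilde V$, and $P_X^t(p)$ becomes the identity from $\{x_1=0\}$ to $\{x_1=t\}$. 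Define $L_t:=\dot A_t A_t^{-1}$; since $A_t\in SL(j,\mathbb{R})$ (acting as identity on $\tilde V$) is $C^\infty$ and constant outside $[0,\tau]$, $L_t$ is a smooth curve of traceless linear maps on $N_p$, vanishing outside $[0,\tau]$, with $\|L_t\|<\theta_0$.

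Next, choose a smooth radial bump $\chi(y)\in[0,1]$ with $\chi\equiv1$ for $|y|\leq r_0/2$ and $\chi\equiv0$ for $|y|\geq r_0$, and set, inside the flowbox,
\[
\tilde Y(x_1,y)\;=\;\partial_{x_1}+\chi(y)\,L_{x_1}\,y,
\]
extended by $\tilde Y\equiv X$ elsewhere. On the core $\{|y|\leq r_0/2\}$ the field reduces to $\partial_{x_1}+L_{x_1}y$, whose flow is $\dot x_1=1$, $\dot y=L_{x_1}y$, yielding $\tilde Y^t(p)=X^t(p)$ and transverse derivative exactly $A_t$; so conclusions (B) and (C) already hold for $\tilde Y$. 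A direct computation, using $\mathrm{tr}(L_{x_1})=0$, gives $\mathrm{div}(\tilde Y)=\nabla\chi(y)\cdot L_{x_1}y$, which vanishes both on the core and outside the flowbox, with $C^0$-norm bounded by $\|\nabla\chi\|_{C^0}\,\theta_0\,r_0$.

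Now I would apply Lemma~\ref{am} with $\mathcal{K}$ a closed neighborhood of $\Gamma(p,\tau)$ sitting inside the core $\{|y|\leq r_0/2\}$ (on which $\tilde Y$ is already divergence-free and realizes the prescribed Poincaré derivative), and $\mathcal{U}$ equal to the flowbox $\mathcal{T}$. Choosing $\theta_0=\theta_0(\epsilon,X)$ small enough makes $\tilde Y$ arbitrarily $C^1$-close to $X$ on $\mathcal{U}$, so Lemma~\ref{am} produces $Y\in\mathfrak{X}_\mu^1(M)$ which is $\epsilon$-$C^1$-close to $X$, agrees with $\tilde Y$ on a neighborhood $\mathcal{V}\supset\mathcal{K}$ of $\Gamma(p,\tau)$, and equals $X$ outside $\mathcal{T}$. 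Conclusions (A) and (D) are then immediate, while (B) and (C) are preserved because $Y\equiv\tilde Y$ on a full open neighborhood of $\Gamma(p,\tau)$.

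The main obstacle is guaranteeing that the divergence-correction step leaves the orbit and its first-order transverse derivatives untouched, since any change there would destroy (B) or (C). This is exactly what the ``$Z=Y$ on $\mathcal{V}$'' clause of Lemma~\ref{am} provides, and it is usable only because the traceless hypothesis $L_t\in sl(j,\mathbb{R})$ makes $\tilde Y$ \emph{already} divergence-free on a full neighborhood of $\Gamma(p,\tau)$; without tracelessness one would have to redistribute volume on the orbit itself. A secondary technical point is that the estimate $\|\tilde Y-X\|_{C^1}\lesssim\theta_0$ uses $\tau\in[1,2]$ to keep $\|L_t\|$ and the associated Gronwall-type bounds uniform, with constants depending only on $\chi$ and $X$.
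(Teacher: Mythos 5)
The paper itself offers no proof of Lemma~\ref{mpl}; it is quoted from \cite[Lemma 3.2]{BeRo2}. Your sketch follows the same general strategy as that cited proof (conservative flowbox coordinates, realization of $\{A_t\}$ as the fundamental solution of $\dot y=L_t\,y$ with $L_t=\dot A_tA_t^{-1}$ traceless, and a Dacorogna--Moser correction of the divergence created by the cutoff), but as written it has two genuine gaps. The first is the final step: you apply Lemma~\ref{am} to $\tilde Y=\partial_{x_1}+\chi(y)L_{x_1}y$, yet Lemma~\ref{am} requires its input to lie in $\mathfrak{X}^2_\mu(M)$, and your $\tilde Y$ is precisely \emph{not} divergence-free on the annulus $\{r_0/2<|y|<r_0\}$ --- that is the defect you are trying to remove, so you are invoking the lemma outside its hypotheses. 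The fix is to feed Lemma~\ref{am} the uncut linear model $\partial_{x_1}+L_{x_1}y$, which \emph{is} divergence-free on the whole flowbox by tracelessness of $L_t$, and let the lemma perform both the cutoff and the correction. Relatedly, one must arrange for the Dacorogna--Moser correction to be supported in a region avoiding the axis $\{y=0\}$ entirely, not merely the segment $x_1\in[0,\tau]$: otherwise conclusion (B), which is demanded for \emph{all} $t\in\mathbb{R}$, can fail on the pieces of the orbit just beyond the endpoints of $\Gamma(p,\tau)$. This requires a cutoff depending only on $y$ together with the observation that $L_{x_1}=0$ for $x_1\notin[0,\tau]$.

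The second gap is in the $C^1$ estimate. The $x_1$-derivative of your perturbation is $\chi(y)\dot L_{x_1}y$, and $\|\dot L_t\|=\|\ddot A_tA_t^{-1}-L_t^2\|$ is \emph{not} controlled by the hypothesis $\|L_t\|<\theta_0$. That term is small only because $|y|\le r_0$ and the transverse radius $r_0$ is taken small \emph{depending on the family} $\{A_t\}$ --- this is what the clause ``for any sufficiently small flowbox'' is doing in the statement --- so your closing claim that the constants depend only on $\chi$ and $X$ is incorrect. A smaller omission: to obtain coordinates in which simultaneously $X=\partial/\partial x_1$ and the volume is Lebesgue you need a conservative tubular flow theorem (as in \cite{AM}), not Moser's theorem alone, and the loss of derivatives in that change of coordinates is exactly why the statement assumes $X\in\mathfrak{X}^4_\mu(M)$; your argument neither uses nor explains this regularity hypothesis.
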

\end{subsection}
\end{section}

\begin{section}{Proof of Theorem~\ref{teo2}}

In order to go on with our proof we observe that the main steps are based on the arguments in \cite[\S 2]{MSS}. From \S\ref{sing} and \S\ref{closed} below it follows that any $X\in int_{C^1}(\mathpzc{TS}_\mu(M))$ has all its singularities and closed orbits of hyperbolic-type. Therefore, $int_{C^1}(\mathpzc{TS}_\mu(M))\subset \mathcal{G}^1_\mu(M)$.

\begin{subsection}{Singularities}\label{sing}
We are going to prove that, if $X\in int_{C^1}(\mathpzc{TS}_\mu(M))$, then any singularity of $X$ is hyperbolic. By contradiction let us assume that there exists a non-hyperbolic singularity $\sigma \in Sing(X)$. According to Lemma~\ref{linear} we consider a family of divergence-free vector fields $\{Z_\delta\}_{\delta\geq 0}$, where $Z_{\delta}$ is linear and $Z_0=DX_{\sigma}$ in $B_{\frac{\epsilon_0}{4}}(\sigma)$, and $\delta$ is sufficiently small to assure that this family is contained in $int_{C^1}(\mathpzc{TS}_\mu(M))$. We observe that 
we can chose $\delta^\prime$ arbitrarily small such that $\sigma$ is a hyperbolic singularity for $Z_{\delta^\prime}$ and such that $Z_{\delta^{\prime}}$ is semiconjugated to $Z_0$, that is, there exists a continuous and onto map $h\colon M\rightarrow M$ (arbitrarily close to the identity depending on $\delta^{\prime}$) and a continuous real map $\tau \colon M\times\mathbb{R}\rightarrow\mathbb{R}$ such that for any $x\in M$, $\tau_x \colon \mathbb{R}\rightarrow\mathbb{R}$ is an orientation preserving homeomorphism where $\tau(x,0)=0$ and for all $x\in M$ and $t\in\mathbb{R}$ we have
\begin{equation}\label{h}
 h(Z_{\delta^\prime}^t(x))=Z_{0}^{\tau(x,t)}(h(x)).
\end{equation}

As $\sigma$ is non-hyperbolic for $Z_0$, there exists $z\in M$ such that $$\sigma\notin \{B_{\epsilon}(Z_{0}^{t}(z))\colon t\in\mathbb{R}\} \text{ and } \{B_{\epsilon}(Z_{0}^{t}(z))\colon t\in\mathbb{R}\}\subset B_{\frac{\epsilon_0}{8}}(\sigma)$$ and $\epsilon<\frac{\epsilon_0}{16}$. Let $w\in h^{-1}(z)$. From (\ref{h}) we get that $h(Z_{\delta^\prime}^t(w))=Z_0^{\tau(w,t)}(z)$ and, since $h$ is arbitrarily close to the identity, we obtain that $$\{Z_{\delta^\prime}^{t}(w)\colon t\in\mathbb{R}\}\subset \{B_{\epsilon}(Z_{0}^{t}(z))\colon t\in\mathbb{R}\}\subset B_{\frac{\epsilon_0}{8}}(\sigma),$$
which is a contradiction because $\sigma$ is a hyperbolic singularity of $Z_{\delta^\prime}$ and, when restricted to $B_{\frac{\epsilon_0}{8}}(\sigma)$, the vector field $Z_{0}$ is linear.
\end{subsection}

\begin{subsection}{Closed orbits}\label{closed}
Fix $X\in int_{C^1}(\mathpzc{TS}_\mu(M))$. Now we are going to prove that all the closed orbits of $X$ are hyperbolic. 

Assume that $X$ has a non-hyperbolic closed orbit $p$ of period $\pi(p)$. In order to proceed with the arguments in \cite[\S 2]{MSS} we need to $C^1$-approximate the vector field $X$ by a new one which is \emph{linear} in a neighborhood of the closed orbit $p$. To perform this task, in the conservative setting, it is required more differentiability of the vector field (cf. Lemma~\ref{mpl}).

If $X$ is of class $C^\infty$, take $Z=X$, otherwise we use \cite{Z} in order to obtain a $C^\infty$ incompressible vector field $Y\in int_{C^1}(\mathpzc{TS}_\mu(M))$, arbitrarily $C^1$-close to $X$, and such that $Y$ has a closed orbit\footnote{Notice that $q$ may not be the analytic continuation of $p$. This is precisely the case when $1$ is an eigenvalue of $P_X^{\pi(p)}(p)$.} $q$, close to $p$, and with period $\pi(q)$ close to $\pi(p)$. If $q$ is not hyperbolic take $Z=Y$. If $q$ is hyperbolic for $P_Y^{\pi(q)}(q)$, then, since $Y$ is $C^1$-arbitrarily close to $X$, the distance between the spectrum of $P_Y^{\pi(q)}(q)$ and $\mathbb{S}^1$ can be taken arbitrarily close to zero (weak hyperbolicity). Now, we are in position to apply Lemma~\ref{mpl} to obtain a new vector field $Z\in\mathfrak{X}^\infty_\mu(M)\cap int_{C^1}(\mathpzc{TS}_\mu(M))$, $C^1$-close to $Y$ and having a non-hyperbolic closed orbit.

Now, we argue as in \cite[\S 3]{BeRo1} in order to obtain $L\in int_{C^1}(\mathpzc{TS}_\mu(M))$ such that $L$ is linear (equal to $P_Z^t$) in a neighborhood of the closed non-hyperbolic orbit, $\Gamma$.

Finally, we $C^1$-approximate $L$ by $W\in int_{C^1}(\mathpzc{TS}_\mu(M))$ such that $\Gamma$ is hyperbolic (for $W$). This is a contradiction because $W$ is semiconjugated to $L$, although there is an $L^t$-orbit (different from $\Gamma$) contained in a small neighborhood of $\Gamma$ and the same cannot occur for $W^t$ because $\Gamma$ is a hyperbolic closed orbit for $W^t$.

\end{subsection}

\end{section}

\begin{section}{The volume-preserving diffeomorphisms case}\label{diff}
Let $\text{Diff}_\mu^{\,\,1}(M)$ denote the set of volume-preserving (or conservative) diffeomorphisms defined on $M$, and consider this space endowed with the $C^1$ Whitney topology. In this section we assume that $\dim(M)\geq 2$. We say that a diffeomorphism $f$ is \emph{Anosov} if, there exist $\lambda\in(0,1)$ and $C>0$ such that the tangent vector bundle over $M$ splits into two $Df$-invariant subbundles $TM=E^u\oplus E^s$, with $\|Df^n|_{E^s}\|\leq C\lambda^n$ and $\|Df^{-n}|_{E^u}\|\leq C\lambda^n$.

We say that $f \in \mathcal{F}_{\mu}^1(M)$ if there exists a neighborhood $\mathcal{V}$ of $f$ in $\text{Diff}_\mu^{\,\,1}(M)$ such that any $g\in\mathcal{V}$ has all the periodic orbits  hyperbolic. In~\cite{AC} Arbieto and Catalan proved the following result.

\begin{maintheorem}\label{AC}
If $f\in \mathcal{F}^1_{\mu}(M)$ then $f$ is Anosov. 
\end{maintheorem}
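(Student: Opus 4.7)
The plan is to mimic, in the discrete-time setting, the strategy underlying Theorem~\ref{Ferreira}: from the assumption that every periodic orbit of every nearby conservative diffeomorphism is hyperbolic, produce a globally uniformly hyperbolic structure. The main ingredients are the $C^1$-closing lemma, a conservative version of Franks' lemma, and an argument upgrading a dominated splitting to a uniformly hyperbolic one in the presence of volume preservation.

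First, since $\mu$ has full support and every point is non-wandering for $f$, the Pugh--Robinson conservative $C^1$-closing lemma ensures that the periodic set $\Per(f)$ is dense in $M$. Because $f\in\mathcal{F}^1_\mu(M)$, every periodic point of every $g$ sufficiently $C^1$-close to $f$ is hyperbolic. Using the volume-preserving Franks' lemma, one can realize a prescribed small perturbation of the cocycle $Df^{\pi(p)}$ along any periodic orbit as the derivative of a nearby $g\in\text{Diff}_\mu^{\,\,1}(M)$. The $C^1$-robustness of hyperbolicity then forces uniform Pliss-type bounds: there exist $m\in\N$, $\lambda\in(0,1)$ and a decomposition $T_pM=E^s_p\oplus E^u_p$ at each periodic $p$ with $\pi(p)\geq m$ such that
\[
\prod_{i=0}^{k-1}\nrm{Df^m|_{E^s_{f^{im}(p)}}}\leq\lambda^k \quad\text{and}\quad \prod_{i=0}^{k-1}\nrm{Df^{-m}|_{E^u_{f^{im}(p)}}}\leq\lambda^k,
\]
where $k=\intpart{\pi(p)/m}$. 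If these bounds failed, a suitable rotation-type perturbation via Franks would turn $p$ into a non-hyperbolic periodic point of a $C^1$-close conservative diffeomorphism, contradicting the hypothesis.

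Second, I would derive a dominated splitting on $\Per(f)$ with uniform constants from these Pliss bounds together with uniform angle estimates between the spaces $E^s_p$ and $E^u_p$ (arbitrarily small angles being again ruled out by a Franks-type bifurcation). Dominated splittings extend continuously to the closure of their domain, so the density $\overline{\Per(f)}=M$ produces a continuous $Df$-invariant dominated splitting $TM=E\oplus F$ with $\dim E=\dim E^s_p$.

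The main obstacle is the final step: upgrading this dominated splitting to a uniformly hyperbolic one. I would argue by contradiction. If $E$ were not uniformly contracted, Pliss' lemma would supply orbit segments of arbitrary length along which the contraction is weaker than $\lambda$; Ma\~n\'e's conservative ergodic closing lemma would close such segments to genuine periodic orbits of a $C^1$-nearby $g\in\text{Diff}_\mu^{\,\,1}(M)$ whose Lyapunov exponents along $E$ cluster near zero. A last application of the conservative Franks' lemma would then yield a non-hyperbolic periodic orbit for a $C^1$-perturbation of $g$, again contradicting $f\in\mathcal{F}^1_\mu(M)$. The symmetric argument applied to $f^{-1}$ gives uniform expansion on $F$, and the proof is complete. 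The delicate point throughout is that conservative perturbations are rigid: any perturbed linear cocycle along an orbit must be realizable as the derivative of a volume-preserving diffeomorphism, so each use of Franks' lemma must be carried out in the conservative category, and the closing step requires Pugh--Robinson/Ma\~n\'e rather than the classical closing lemma.
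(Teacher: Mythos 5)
The paper does not actually prove Theorem~\ref{AC}: it is imported verbatim from Arbieto and Catalan~\cite{AC}, so there is no internal proof to compare yours against. Your outline does reproduce the architecture of their argument --- density of $\Per(f)$ in $M$ via Poincar\'e recurrence and the conservative $C^1$-closing lemma, uniform Pliss-type estimates at periodic orbits extracted from the robustness of hyperbolicity by means of a volume-preserving Franks' lemma, a dominated splitting over $\overline{\Per(f)}=M$, and the final upgrade to uniform hyperbolicity via Ma\~n\'e's ergodic closing lemma in the conservative category.

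That said, as written your argument has a genuine gap at the point where you pass from splittings $T_pM=E^s_p\oplus E^u_p$ at individual periodic points to a single global splitting $TM=E\oplus F$ with $\dim E=\dim E^s_p$: nothing in what precedes guarantees that the stable index is the same for all periodic points. When $\dim M\geq 3$, periodic orbits of different indices could a priori coexist and accumulate on one another, so the closures of the sets of periodic points of the various indices may overlap, and the candidate dominated splittings would then be incompatible. Ruling this out --- for instance by showing that two dynamically linked hyperbolic periodic orbits of different indices can be turned, by a conservative $C^1$-perturbation, into a non-hyperbolic periodic orbit, or by exploiting the uniformity of the domination to force the index to be locally constant on the (connected) manifold --- is an essential step of the Arbieto--Catalan proof and cannot be skipped. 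The remaining steps are each standard-but-heavy citations (conservative closing lemma, conservative Franks' lemma as in \cite[Proposition 7.4]{BDP}, conservative ergodic closing lemma), which is acceptable at the level of a proof sketch; the index issue, by contrast, is a missing idea rather than a missing detail.
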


Given $f,g\in\text{Diff}_\mu^{\,\,1}(M)$ we say that $g$ is \emph{semiconjugated} to $f$ if there exists a continuous and onto map $h\colon M\rightarrow M$ such that for all $x\in M$ one has $h(g(x))=f(h(x))$. 

We say that $f$ is \emph{topologically stable} in $\text{Diff}_\mu^{\,\,1}(M)$ if, for any $\epsilon>0$, there exists $\delta>0$ such that for any $g\in \text{Diff}_\mu^{\,\,1}(M)$ $\delta$-$C^0$-close to $f$, there exists a semiconjugacy from $g$ to $f$, i.e., there exists $h\colon M\rightarrow M$ satisfying $h(g(x))=f(h(x))$ and $d(h(x),x)<\epsilon$, for all $x\in M$. Once again we emphasize that our definition of topological stability is restricted to the conservative setting and not to the  broader space of dissipative diffeomorphisms. We denote the set of topological stable incompressible flows by $\emph{\textbf{TS}}_\mu(M)$.

In this section we obtain the discrete-time version of Theorem~\ref{teo1}.

\begin{maintheorem}
If $f\in int_{C^1}(\textbf{TS}_\mu(M))$ then $f$ is Anosov. 
\end{maintheorem}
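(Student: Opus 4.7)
By Theorem~\ref{AC} it suffices to prove that $int_{C^1}(\textbf{TS}_\mu(M))\subset \mathcal{F}^1_\mu(M)$, and, since this set is open, what I need to show is: every $f\in int_{C^1}(\textbf{TS}_\mu(M))$ has all its periodic orbits hyperbolic. There are no singularities to handle in the discrete-time setting, so the whole proof is a direct analogue of \S\ref{closed}, with Lemma~\ref{am} and Lemma~\ref{mpl} replaced by their well-known conservative diffeomorphism counterparts.

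I would argue by contradiction, assuming that some $f\in int_{C^1}(\textbf{TS}_\mu(M))$ admits a non-hyperbolic periodic point $p$ of period $\pi$, with orbit $\Gamma=\{f^{j}(p):0\leq j<\pi\}$. First, using a Zuppa-type $C^{\infty}$-density theorem for volume-preserving diffeomorphisms, $f$ is $C^{1}$-approximated by a $C^{\infty}$ diffeomorphism in $int_{C^1}(\textbf{TS}_\mu(M))$ with a closed orbit near $\Gamma$; if this orbit happens to be hyperbolic, its spectrum is by openness arbitrarily close to $\mathbb{S}^{1}$, so a conservative Franks' lemma for diffeomorphisms lets us further perturb within $int_{C^1}(\textbf{TS}_\mu(M))$ to produce $Z\in\text{Diff}^{\infty}_{\mu}(M)\cap int_{C^1}(\textbf{TS}_\mu(M))$ still having a non-hyperbolic closed orbit, still denoted by $\Gamma$. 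Then, by the diffeomorphism analogue of the conservative linearization of~\cite[\S 3]{BeRo1}, $Z$ is $C^{1}$-approximated by an $L\in int_{C^1}(\textbf{TS}_\mu(M))$ which, in Moser coordinates near $\Gamma$, coincides with its derivative $A:=DL^{\pi}(p)$; by an additional small tweak (another Franks-type perturbation) the non-hyperbolic eigenvalue of $A$ on $\mathbb{S}^{1}$ may be assumed to be of irrational rotation type, so that the center subspace of $A$ yields a continuum of $L$-orbits distinct from $\Gamma$ and remaining in an arbitrarily small neighborhood of $\Gamma$ under all iterates.

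Finally, one last conservative Franks perturbation produces $W\in int_{C^1}(\textbf{TS}_\mu(M))$ with $\Gamma$ still a periodic orbit of period $\pi$, but now hyperbolic, and small enough that the semiconjugacy $h\colon M\to M$ from $W$ to $L$ (provided by the topological stability of $L$) is as $C^{0}$-close to the identity as required. To close the argument I would choose $z$ on an $L$-orbit contained in a very small neighborhood of $\Gamma$ with $z\notin\Gamma\cup h(\Gamma)$ (possible because the center subspace gives a continuum of such $L$-orbits while $\Gamma\cup h(\Gamma)$ is finite), and any $w\in h^{-1}(z)$. The relation $h\circ W^{n}=L^{n}\circ h$ together with $d(h,\mathrm{id})<\epsilon$ forces the full $W$-orbit of $w$ to remain in a small neighborhood of $\Gamma$, which by hyperbolicity of $\Gamma$ for $W$ forces $w\in\Gamma$ and hence $z=h(w)\in h(\Gamma)$, contradicting our choice of $z$.

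The main obstacle I anticipate is the conservative linearization step and the bookkeeping needed to keep every intermediate perturbation inside $int_{C^1}(\textbf{TS}_\mu(M))$; the conservative Franks' lemma and pasting lemma for diffeomorphisms are by now standard in the literature, so no genuinely new analytic ingredient should be required beyond a careful discrete-time repetition of the flow arguments of \S\ref{closed}.
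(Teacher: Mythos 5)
Your argument reproduces the paper's proof essentially step for step: reduce to membership in $\mathcal{F}^1_\mu(M)$ and invoke Theorem~\ref{AC}, then rule out a non-hyperbolic periodic orbit by regularizing to $C^\infty$, linearizing near the orbit via the conservative pasting lemma, applying the conservative Franks' lemma, and deriving the contradiction from the semiconjugacy exactly as in \S\ref{closed}. The only caveat is that the $C^1$-approximation of a volume-preserving diffeomorphism by a $C^\infty$ one is not a routine ``Zuppa-type'' density statement (Zuppa's theorem concerns vector fields) but Avila's regularization theorem~\cite{A}, which is precisely the reference the paper relies on for this step.
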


The proof is similar to the one done in \S\ref{closed} and, for this reason, we present a brief highlight of it. As before, given $f\in int_{C^1}(\textbf{TS}_\mu(M))$, we prove that all its  periodic orbits are hyperbolic; from this it follows that $int_{C^1}(\textbf{TS}_\mu(M))\subset \mathcal{F}^1_{\mu}(M)$. Then, using Theorem~\ref{AC}, we obtain that any $f\in int_{C^1}(\textbf{TS}_\mu(M))$ is Anosov. 

Once again we assume, by contradiction, that there is some non-hyperbolic orbit. Now, to argue as in the flow case, we make use of the following two ingredients:
\begin{enumerate}
 \item a way to linearize the diffeomorphism in a neighborhood of a periodic point and
 \item a ``perturbation of the derivative'' result in the vein of Lemma~\ref{mpl}.
\end{enumerate}

The item (2) is available in the literature (see~\cite[Proposition 7.4]{BDP}). 

For (1) we just have to approximate $f\in int_{C^1}(\emph{\textbf{TS}}_\mu(M))$ by a diffeomorphism $g\in int_{C^1}(\emph{\textbf{TS}}_\mu(M))\cap \text{Diff}_\mu^{\,\,\infty}(M)$ using a recent result by Avila~(\cite{A}), and then we use the Pasting lemma~\cite[Theorem 3.6]{AM} to obtain $h\in int_{C^1}(\emph{\textbf{TS}}_\mu(M))\cap \text{Diff}_\mu^{\,\,\infty}(M)$ such that $h=Dg_p$ in a neighborhood of the periodic orbit $p$. This is precisely what we need to obtain a contradiction as we did in \S\ref{closed}.

\end{section}

\section*{Acknowledgements}

The authors were partially supported by the FCT-Funda\c{c}\~ao para a Ci\^encia e a Tecnologia, project PTDC/MAT/099493/2008.
MB was partially supported by FCT (SFRH/BPD/20890/2004).


\end{document}